\def\authorsaddresses#1{\dedicatory{#1}}
\newtheorem{theorem}{Theorem}[section]
\newtheorem{corollary}[theorem]{Corollary}
\theoremstyle{definition}
\theoremstyle{remark}
\numberwithin{equation}{section}
\begin{document}
\setcounter{page}{1}



\title[Contraction of graph and spanning trees]{Contraction of graphs and spanning k-end trees}

\author[GHASEMIAN ZOERAM]{HAMED GHASEMIAN ZOERAM}

\authorsaddresses{ Department of Pure Mathematics, Ferdowsi University
of Mashhad,  Iran\\ 
hamed90ghasemian@gmail.com\\
}
\subjclass[2010]{Primary 05C05; Secondary 05C07, 05C30.}

\keywords{Spanning tree, $k$-end tree, $k$-ended tree, Contraction, Subdivision.}

\begin{abstract}
A tree with at most $k$ leaves is called $k$-ended tree, and a tree with exactly $k$ leaves is called $k$-end tree, where a leaf is a vertex of degree one. 
Contraction of a graph $G$ along the edge $e$ means deleting the edge $e$ and identifying its end vertices and deleting all edges between every two vertex except one edge to gain again a simple graph. Contraction of edge $e$ on graph $G$ is denoted by $G/ e$.
In this paper we prove some theorems related to a graph and its contraction. For example we prove the following theorem. If $G$ is a connected graph that has a spanning $k$-end tree and $|V(G)| >K+1$ then there exist an edge $e$ such $G / e$ has a spanning $k$-end tree.
\end{abstract}

\maketitle


\section{Introduction}

In this paper all graphs are simple. Vertex set and edge set of graph $G$ are denoted by $V(G)$ and $E(G)$ successively, degree of vertex $v$ in graph $G$ is denoted by $\deg_{G}(v)$. If $v$ is a vertex of graph $G$, $N_G(v)$ is set of all vertices adjacent to $v$ in $G$. If $T$ is a tree then the unique path between very two vertices $v$ and $u$ is denoted by $uTv$ and $u^{-}$ is the vertex adjacent to $u$ in this path, and also $v^{-}$. We also denote the edge $e$ with $uv$ or $vu$ where $u$ and $v$ are end vertices of $e$. A Hamiltonian path in graph $G$ is a path that contains all vertices of graph. Subdividing the edge $e$ with end vertices $u$ and $v$ in graph $G$ is an operation that produces a new graph whose vertex set is $V(G)\cup\{w\}$ and edge set is $(E(G)-\{e\})\cup\{e^{'},e^{''}\}$ where $w$ is a new vertex, $e^{'}=uw$ and $e^{''}=wv$(figure 1.1). A graph that is obtained of finite sequence of subdivisions of edges of graph $G$ is called a subdivision of $G$.
\begin{figure}[h!] 
\centering
\begin{tikzpicture}[scale=1] 
\GraphInit[vstyle=Classic]
\tikzset{VertexStyle/.style = {shape = circle,fill = black, minimum size = 1pt,inner sep=1.5pt}}
\Vertex[Lpos=90,L=$u$,x=-6, y=0]{x}
\Vertex[Lpos=90,L=$v$,x=-3, y=0]{y}
\Vertex[Lpos=90,L=$u$,x=-0.5, y=0]{z}
\Vertex[Lpos=90,L=$w$,x=1, y=0]{w}
\Vertex[Lpos=90,L=$v$,x=2.5, y=0]{p}
\Edges(x,y)
\Edges(z,w,p)
\end{tikzpicture}
\caption*{Figure 1.1: right side graph is a subdivision of left side}\label{fi2.1}
\end{figure}
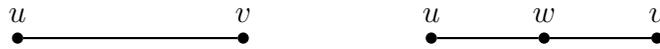
\\Now we presents some famous theorems about $k$-ended trees. First one is Ore's theorem.
\begin{theorem}
Suppose $G$ be a graph with $|V(G)|\geq3$, if for every two non-adjacent vertices $v$ and $w$ of $G$ we have $\deg {v} +\deg {w} \geq {|V(G)|}-1$ then $G$ has a spanning 2-ended tree or a Hamiltonian path.
\end{theorem}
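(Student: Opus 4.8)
The plan is to show that $G$ has a Hamiltonian path; since $|V(G)| \geq 3$, any spanning tree has at least two leaves, so a spanning $2$-ended tree (a spanning tree with at most, hence exactly, two leaves) is precisely a Hamiltonian path. Thus establishing a Hamiltonian path simultaneously realizes both alternatives in the conclusion. Write $n = |V(G)|$. First I would record that the hypothesis forces $G$ to be connected: if $G$ had two components, choosing $u$ and $v$ in distinct components gives a non-adjacent pair with $\deg u + \deg v \leq (n-2)$, contradicting $\deg u + \deg v \geq n - 1$.

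Next I would take a longest path $P = v_1 v_2 \cdots v_p$ in $G$ and aim to prove $p = n$, arguing by contradiction from $p < n$. By maximality of $P$, every neighbor of the endpoint $v_1$ and every neighbor of $v_p$ lies on $P$. The core of the argument is to manufacture, out of $P$, a cycle through all $p$ of its vertices, and then exploit connectivity to lengthen $P$. To build this cycle I distinguish two cases. If $v_1 v_p \in E(G)$, then $v_1 v_2 \cdots v_p v_1$ is already such a cycle. If $v_1 v_p \notin E(G)$, I use the standard crossing argument: set $A = \{\, i : v_1 v_{i+1} \in E(G) \,\}$ and $B = \{\, i : v_p v_i \in E(G) \,\}$, both subsets of $\{1, \ldots, p-1\}$ with $|A| = \deg v_1$ and $|B| = \deg v_p$. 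Applying the Ore hypothesis to the non-adjacent pair $v_1, v_p$ gives $|A| + |B| \geq n - 1$; since $p < n$ this strictly exceeds $p - 1 = |\{1,\ldots,p-1\}|$, so by pigeonhole some index $i$ lies in $A \cap B$. For that $i$ the edges $v_1 v_{i+1}$ and $v_i v_p$ close $P$ into the cycle
\begin{equation}
v_1 v_2 \cdots v_i \, v_p v_{p-1} \cdots v_{i+1} \, v_1,
\end{equation}
which again visits all $p$ vertices of $P$.

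With a cycle $C$ on $V(P)$ in hand and $p < n$, connectivity of $G$ supplies a vertex $u \notin V(C)$ adjacent to some $v_j \in V(C)$; cutting $C$ at the edge incident to $v_j$ and prepending $u$ yields a path on $p + 1$ vertices, contradicting the maximality of $P$. Hence $p = n$ and $P$ is the desired Hamiltonian path.

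I expect the main obstacle to be the cycle-construction step: getting the pigeonhole bookkeeping exactly right (that $|A| + |B| \geq n-1 > p-1$ forces $A \cap B \neq \emptyset$ precisely when $P$ is not already Hamiltonian) and correctly reversing the tail of $P$ so that the closing edges $v_1 v_{i+1}$ and $v_i v_p$ really produce a simple cycle. A shorter alternative I could take instead is to adjoin a new vertex $w$ joined to every vertex of $G$, verify that the enlarged graph meets the Ore condition for Hamiltonian cycles (non-adjacent pairs now satisfy a degree sum $\geq (n-1) + 2 = |V(G)| + 1$), invoke the cycle version of Ore's theorem, and delete $w$ from the resulting Hamiltonian cycle to recover a Hamiltonian path; but since that presupposes the cycle form of Ore's theorem, I would favor the self-contained longest-path argument above.
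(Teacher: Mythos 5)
Your proof is correct, but there is nothing in the paper to compare it against: Theorem 1.1 is stated as background (it is Ore's classical theorem, cited to Ore's 1960 note) and the paper gives no proof of it. What you have supplied is the standard self-contained argument, and every step checks out. The preliminary observation that the two alternatives in the conclusion coincide is right under the paper's own definitions (a spanning tree with at most two leaves has, for $|V(G)|\geq 2$, exactly two leaves, hence is a Hamiltonian path), so proving the existence of a Hamiltonian path suffices. The connectivity step is correct: two vertices in distinct components are non-adjacent and have degree sum at most $n-2$. The rotation argument is also correctly bookkept: with $P=v_1\cdots v_p$ a longest path and $v_1v_p\notin E(G)$, both $A=\{i: v_1v_{i+1}\in E(G)\}$ and $B=\{i: v_pv_i\in E(G)\}$ sit inside $\{1,\dots,p-1\}$ because maximality of $P$ confines all neighbors of $v_1$ and $v_p$ to $P$, and $|A\cap B|\geq |A|+|B|-(p-1)\geq (n-1)-(p-1)=n-p>0$, so the crossing index $i$ exists and the cycle $v_1\cdots v_i v_p v_{p-1}\cdots v_{i+1}v_1$ is a genuine simple cycle on $V(P)$ (note that the case hypothesis $v_1v_p\notin E(G)$ automatically rules out the degenerate indices $i=1$ and $i=p-1$). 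The final step, absorbing an outside vertex via connectivity to get a longer path, correctly contradicts maximality. Your alternative route (adjoin a universal vertex and invoke the cycle form of Ore's theorem) would also work, but as you say it presupposes the cycle theorem, so the longest-path argument is the better choice for a self-contained write-up; either one fills a gap the paper leaves open by citation.
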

A graph is called $K_{1,4}$-free if doesn't contain $K_{1,4}$ as a induced sub graph where $K_{1,4}$ is complete bipartite graph illustrate in figure 1.2. If $k$ is a positive integer then $\sigma_{k}(G)$ =min\{$\deg(U)$ : U is an independent set of $G$ with $|U|=k$\}, where $\deg(U)=\sum_{u\in U} \deg (u)$\\
\begin{figure}[h!] 
\centering
\begin{tikzpicture}[scale=1] 
\GraphInit[vstyle=Classic]
\tikzset{VertexStyle/.style = {shape = circle,fill = black, minimum size = 1pt,inner sep=1.5pt}}
\Vertex[Lpos=90,L=$$,x=2, y=3]{x}
\Vertex[Lpos=90,L=$$,x=2, y=1.8]{y}
\Vertex[Lpos=90,L=$$,x=2, y=0.6]{z}
\Vertex[Lpos=90,L=$$,x=2, y=-0.6]{w}
\Vertex[Lpos=90,L=$$,x=-1, y=1.2]{p}
\Edges(p,x)
\Edges(p,y)
\Edges(p,z)
\Edges(p,w)
\end{tikzpicture}
\caption*{Figure 1.2}\label{fi2.1}
\end{figure}
\begin{theorem}
Every connected $K_{1,4}$-free graph $G$ with $\sigma_4\ge|G|-1$ contains a spanning tree with at most $k$ leaves.
\end{theorem}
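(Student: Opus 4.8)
Reading the displayed inequality $\sigma_4 \ge |G|-1$ together with ``at most $k$ leaves,'' I take the intended statement to be Kyaw's theorem: a connected $K_{1,4}$-free graph with $\sigma_{k+1}(G) \ge |G|-1$ has a spanning tree with at most $k$ leaves (so the displayed $\sigma_4$ corresponds to $k=3$). The plan is to argue by contradiction through an extremal spanning tree. Assume $G$ has no spanning tree with at most $k$ leaves, and let $T$ be a spanning tree of $G$ whose number of leaves $\ell$ is as small as possible; then $\ell \ge k+1$. To block the local improvements used later, I would refine the choice: among all leaf-minimal spanning trees I take $T$ maximizing the number of edges on a longest path (or, equivalently, minimizing a suitable potential), so that no rotation can shorten the leaf set.

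The first technical ingredient is the rotation (edge-exchange) move. If $u$ is a leaf of $T$ with $T$-neighbor $u'$ and $u$ is $G$-adjacent to an internal vertex $x \ne u'$, then $T + ux$ has a unique cycle; deleting the edge $x x^{-}$, where $x^{-}$ is the predecessor of $x$ on the path $uTx$, produces a spanning tree in which $u$ is no longer a leaf. Leaf-minimality of $T$ then forces this exchange to create a new leaf, i.e. $\deg_T(x^{-}) = 2$. One subtlety I would flag is that two leaves may legitimately be adjacent in $G$ (for instance when the path between them in $T$ consists only of degree-$2$ vertices, no exchange reduces $\ell$), so the leaves themselves need not form an independent set; the independent set fed into $\sigma_{k+1}$ must be built more carefully.

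Next I would associate to each leaf its \emph{stem}, the maximal path of degree-$\le 2$ vertices of $T$ emanating from it, and use the rotation constraints to select one vertex per stem so that the resulting set $U$ of $k+1$ vertices is independent in $G$. Applying the hypothesis gives $\deg_G(U) \ge \sigma_{k+1}(G) \ge |G|-1$, so the vertices of $U$ dominate almost all of $V(G)$. I would then bound the same quantity from above: every $G$-edge leaving $U$ points, via the exchange move, to a forced degree-$2$ vertex of $T$, and since the internal vertices of $T$ carry total degree $2(|V(G)|-1)-\ell$, the number of admissible attachment sites is limited. Double-counting these incidences against $|G|-1$ should yield $\deg_G(U) < |G|-1$, the desired contradiction.

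The role of $K_{1,4}$-freeness, and the main obstacle, is the analysis at a branch vertex $x$ of $T$ with $\deg_T(x) \ge 3$. There the three tree-branches at $x$, together with a leaf that a rotation would attach at $x$, threaten to form an induced $K_{1,4}$; ruling this out forces extra edges among the branches, and these extra edges are exactly what drive a leaf-reducing exchange, contradicting minimality. Organizing this case analysis --- showing that the forced degree-$2$ constraints from leaf-minimality, the independence of $U$, and $K_{1,4}$-freeness cannot hold simultaneously once $\ell \ge k+1$ --- is the heart of the argument, and making the incidence bookkeeping close without double counting (tracking which vertices contribute to $\deg_G(U)$ versus which serve as attachment sites) is where the real work lies.
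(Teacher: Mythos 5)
You have correctly decoded what the garbled statement must mean: this is Kyaw's theorem (reference [1] of the paper), with $\sigma_4$ forcing $k=3$, and the paper itself offers \emph{no proof} of it --- it is quoted as background, so there is nothing in the paper to compare your argument against. Judged on its own, your proposal is a roadmap rather than a proof. The framework you set up (leaf-minimal spanning tree, the rotation $T+ux-xx^{-}$ forcing $\deg_T(x^{-})=2$, an independent set fed into the degree-sum condition, and a double count against $|G|-1$) is indeed the standard strategy and the rotation lemma itself is correctly stated. But every step that actually carries the weight of the proof is deferred: you observe that the leaves need not be independent and that the set $U$ ``must be built more carefully,'' yet you never construct it; the upper bound on $\deg_G(U)$ is asserted to ``should yield'' a contradiction with no inequality actually derived; and the $K_{1,4}$-free analysis at branch vertices --- which you yourself call ``the heart of the argument'' and ``where the real work lies'' --- is left entirely open.

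These are not routine details. In Kyaw's actual proof the independent set is extracted from carefully chosen vertices on the paths hanging off the branch vertices, the secondary extremal condition on $T$ has to be chosen so that the exchange moves interact correctly with that selection (your proposed tie-break, maximizing a longest path, is not obviously the right one), and the $K_{1,4}$-free hypothesis is used in a delicate case analysis to show that the neighborhoods of the selected vertices are nearly disjoint, which is what makes the count $\deg_G(U)<|G|-1$ close. Without these three pieces the contradiction never materializes, so the proposal as written has a genuine gap at exactly the point where the theorem stops being a generic rotation argument and starts using its hypotheses.
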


\section{Main results}

If $e$ is an edge of graph $G$ with end vertices $u$ and $v$ then we define $N_e(u)=\{x\in V(G); xu\in E(G), x\neq v\}$ and so $N_e(v)=\{x\in V(G); xv\in E(G), x\neq u\}$.

\begin{theorem}
Suppose $G$ is a connected graph and has a spanning $k$-end tree($k$$\in$$\mathbb{N}$,$k$$\geq$2), and $|V(G)|>k+1$. Then there exist an edge $e$ such $G/e$ has a spanning $k$-end tree.
\end{theorem}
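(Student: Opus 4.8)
The plan is to work entirely with a fixed spanning $k$-end tree $T$ of $G$ and to contract one of \emph{its own} edges, so that $T$ descends to a spanning tree of $G/e$. First I would record the basic fact that if $e=uv\in E(T)$, then $T/e$ is again a tree on $|V(G)|-1$ vertices: since $T$ is acyclic, no vertex is adjacent in $T$ to both $u$ and $v$, so contracting $e$ creates no parallel edges, and $T/e$ sits inside the simplified graph $G/e$ as a spanning subgraph. Thus $T/e$ is automatically a spanning tree of $G/e$, and the only thing left to control is its number of leaves.

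Next I would analyze how the leaf count changes when an edge $e=uv$ of $T$ is contracted, writing $w$ for the merged vertex. Every vertex other than $u,v$ retains its degree, so only the fate of $u$ and $v$ matters, and $\deg_{T/e}(w)=\deg_T(u)+\deg_T(v)-2$. The decisive observation is that if both $u$ and $v$ are internal vertices of $T$ (degree at least $2$), then $\deg_{T/e}(w)\ge 2$, so $w$ remains internal and no leaf is created or destroyed: $T/e$ has exactly $k$ leaves. Contracting an edge incident with a leaf can instead drop the count to $k-1$, which is precisely why I avoid such edges. Hence it suffices to exhibit a single edge of $T$ both of whose endpoints are internal.

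The crux is then to guarantee that such an edge exists, and this is where the hypothesis $|V(G)|>k+1$ enters. A tree on $n=|V(G)|$ vertices with exactly $k$ leaves has $n-k$ internal vertices, and $n>k+1$ forces $n-k\ge 2$. I would then show that the internal vertices of a tree induce a connected subtree: for any two internal vertices $a,b$, every interior vertex of the path $aTb$ has two neighbours on that path and is therefore internal, so the entire path lies among the internal vertices. A connected subgraph of a tree on at least two vertices contains an edge, and that edge joins two internal vertices — exactly the edge I need. Contracting it produces the desired spanning $k$-end tree of $G/e$.

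I expect the only genuine subtlety to be the verification that $T/e$ is truly a simple spanning tree of $G/e$, rather than merely a multigraph quotient; the acyclicity argument above settles this, after which the remaining work is routine degree bookkeeping together with the subtree observation. The assumptions $k\ge 2$ and $|V(G)|>k+1$ serve exactly to keep at least two internal vertices in play, without which no internal–internal edge could be forced.
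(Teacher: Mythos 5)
Your proposal is correct and follows essentially the same route as the paper: both arguments fix a spanning $k$-end tree $T$, show that the hypothesis $|V(G)|>k+1$ forces at least two internal (non-leaf) vertices, locate an edge of $T$ joining two internal vertices (the paper takes the first edge $uu^-$ of the path between two such vertices, you invoke the connectedness of the internal subtree), and contract it so that the merged vertex still has degree at least $2$ and the leaf count stays at $k$. Your write-up is somewhat more explicit about why $T/e$ remains a simple spanning tree of $G/e$, but the underlying idea is identical.
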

At theorem $2.1$ we cannot choose an arbitrary edge and make contraction on graph with that edge to get the theorem $2.1$ result. For example, if we consider the graph in figure 2.1 and $e$=$py$ then it has a spanning $3$-end tree but $G/e$ has just a spanning $4$-end tree.\\

\begin{figure}[h!] 
\centering
\begin{tikzpicture}[scale=1] 
\GraphInit[vstyle=Classic]
\tikzset{VertexStyle/.style = {shape = circle,fill = white, minimum size = 1pt,inner sep=1.5pt}}
\Vertex[Lpos=180,L=$G:$,x=-3.5, y=0]{g}
\tikzset{VertexStyle/.style = {shape = circle,fill = black, minimum size =1pt,inner sep=2pt}}
\Vertex[Lpos=90,L=$x$,x=-2, y=0]{x}
\Vertex[Lpos=270,L=$y$,x=-0.4, y=0]{y}
\Vertex[Lpos=270,L=$z$,x=1, y=0]{z}
\Vertex[Lpos=270,L=$w$,x=2.5, y=0]{w}
\Vertex[Lpos=0,L=$p$,x=2.5, y=1.5]{p}
\Vertex[Lpos=0,L=$u$,x=3.5, y=3]{u}
\Vertex[Lpos=0,L=$v$,x=1.5, y=3]{v}
\Edges(x,y,z,w,p,u)
\Edges(y,p,v)
\end{tikzpicture}
\caption*{Figure 2.1}\label{fi2.1}
\end{figure}
\begin{proof}
Suppose $T$ is a spanning $k$-end tree of $G$, put:\\
$A=\{x\in {V(G)} ; \deg_T (x) \neq 1\}$ if $|A|\le 1$ then because $|V(G)|>k+1$ so the number of vertices with degree one in $T$ is greater than $k$, and this is contradiction. So $|A|$$>$$1$, now choose two different vertices $u,v\in A$, in the unique path $uTv$ if consider $uu^-$ then $T/uu^-$ is a spanning tree of $G/uu^-$ with $k$ leaves.
\end{proof}


\begin{theorem}
Consider a connected graph $G$ that has a spanning $k$-end tree with $k$$\ge$$3$ then there exist a sequence $e_1,e_2,\ldots,e_m$ of edges in $G$($m\in\mathbb{N}$) such, if put $G_1=G/e_1$ and $G_i=G_{i-1}/e_i$ $(i=2,...,m)$ then $G_m$ has a spanning $k-1$-end tree.
\end{theorem}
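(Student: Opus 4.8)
The plan is to start from a spanning $k$-end tree $T$ of $G$ and to perform every contraction along an edge of $T$, so that at each step the image of $T$ stays a spanning tree of the contracted graph and only the leaf count must be tracked. First I would record the bookkeeping rule for contracting one tree edge $e=uv$: in $T/e$ the two endpoints merge into a single vertex $w$ of degree $\deg_T(u)+\deg_T(v)-2$, while every other vertex keeps its degree. Because $T$ is acyclic, contracting a tree edge never identifies the two endpoints of another edge of $T$, so no edge of $T$ is destroyed in the passage back to a simple graph; hence $T/e$ is again a spanning tree of $G/e$, being connected with exactly one fewer vertex and one fewer edge. The parallel edges discarded during simplification all lie outside $T$ and are irrelevant to the leaf count.

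Next I would locate a branch vertex. Since $k\ge 3$, summing $\deg_T(v)-2$ over the internal vertices of $T$ yields $k-2\ge 1$, so $T$ has at least one vertex $b$ with $\deg_T(b)\ge 3$. Fix any leaf $\ell$ and follow the unique path $\ell=v_0,v_1,\dots,v_t=b$ from $\ell$ to the first vertex of degree at least $3$ that it meets; then all intermediate vertices $v_1,\dots,v_{t-1}$ have degree exactly $2$ (if the path from $\ell$ reached another leaf before any branch vertex, $T$ would be a path and $k$ would equal $2$). Set $e_i=v_{i-1}v_i$ and contract these edges in order, $G_1=G/e_1$ and $G_i=G_{i-1}/e_i$, noting that each $e_i$ is still present as an edge of $G_{i-1}$ since it has not yet been contracted.

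The key step is the degree computation along this path. By the rule above, each of the contractions $e_1,\dots,e_{t-1}$ merges the current leaf (degree $1$) with a degree-$2$ vertex, producing a vertex of degree $1+2-2=1$; thus the image of $\ell$ remains a leaf and the total number of leaves stays $k$ throughout these steps. The final contraction $e_t$ merges this leaf with $b$, giving a vertex of degree $1+\deg_T(b)-2=\deg_T(b)-1\ge 2$, which is internal. Hence exactly one leaf disappears and no new leaf is created, so with $m=t$ the graph $G_m$ carries the image of $T$ as a spanning tree with exactly $k-1$ leaves.

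The only genuine obstacle is the existence of the branch vertex, and this is exactly where the hypothesis $k\ge 3$ enters: for $k=2$ the tree $T$ is a path, every contraction keeps it a path with two leaves, and the conclusion would fail. Beyond that, the argument reduces to the leaf bookkeeping under edge contraction together with the observation, needed to stay within simple graphs, that contracting a tree edge cannot remove any edge of $T$.
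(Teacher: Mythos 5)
Your proof is correct and takes essentially the same approach as the paper: choose a leaf, walk to the nearest vertex of degree at least $3$, and contract the edges of that path in order. The paper's own proof only names the edges $e_1,\dots,e_m$ and stops there; your bookkeeping (that contracting a tree edge preserves the spanning tree, that intermediate vertices have degree $2$, and that the final merge turns the leaf into an internal vertex of degree $\deg_T(b)-1\ge 2$) supplies exactly the verification the paper leaves implicit.
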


\begin{proof}
Suppose $T$ is a spanning $k$-end tree of $G$. We choose a vertex $v$ of degree one in $T$, because $k\ge3$ there exist vertex or vertices with degree greater than $2$ in $T$, now we choose one of them with minimum distance from $v$ and call it $w$. Consider $vv_1v_2\ldots v_{m-1}w$ as the unique path from $v$ to $w$ in $T$ and put $e_1=vv_1, e_2=v_1v_2,\ldots, e_m=v_{m-1}w$.
\end{proof}

It is obvious every cycle $C_n$($n\geq2$) has a spanning $2$-end tree and if in cycle $C_n$ with $n\geq3$ we contract an edge then we have $C_{n-1}$. It is interesting to know for which edges of a graph $G$ if we contract each of them, for example $e$, then for the minimum $k$ such that $G/e$ has a spanning $k$-end tree, $G$ also has a spanning $k$-end tree. In cycle $C_n$ for every edge $e=uv$ have $|N_e(u)-N_e(v)|\leq1$ and $|N_e(v)-N_e(u)|\leq1$. At the following theorem we prove that, if these two inequalities hold in a graph then we can conclude our above ideal result.

\begin{theorem}
Suppose in a connected graph $G$ for an edge $e$ with end vertices $u$ and $v$ we have $|N_e(u)-N_e(v)|\leq1$ and $|N_e(v)-N_e(u)|\leq1$. If $G/e$ has a spanning $k$-end tree then $G$ has a spanning $k$-end tree.
\end{theorem}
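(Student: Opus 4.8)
The plan is to reverse the contraction at the level of trees. Write $w$ for the vertex of $G/e$ obtained by identifying $u$ and $v$, and let $T'$ be the given spanning $k$-end tree of $G/e$. Since the neighbours of $w$ in $G/e$ are exactly $N_e(u)\cup N_e(v)$, the set $S$ of neighbours of $w$ in $T'$ satisfies $S\subseteq N_e(u)\cup N_e(v)$; in particular every $x\in S$ is adjacent in $G$ to $u$ or to $v$, and to both precisely when $x\in N_e(u)\cap N_e(v)$. First I would build a candidate spanning tree $T$ of $G$ as follows: delete $w$ and its incident edges from $T'$, add back the two vertices $u,v$ together with the edge $e=uv$, and reattach each $x\in S$ by an edge to whichever of $u,v$ it is adjacent to in $G$ -- a choice that is free exactly when $x\in N_e(u)\cap N_e(v)$.

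That $T$ is a spanning tree is a routine check and not the content of the theorem: it has $|V(G)|$ vertices and $|E(T')|+1=(|V(G)|-2)+1=|V(G)|-1$ edges, and it is connected because the added edge $uv$ glues the pieces of $T'-w$ back together, so $T$ is a tree on all of $V(G)$. Hence the whole problem reduces to arranging the reattachment so that $T$ has \emph{exactly} $k$ leaves. I would control this by degree bookkeeping: every vertex of $G$ other than $u$ and $v$ retains its $T'$-degree in $T$, so the number of leaves of $T$ outside $\{u,v\}$ equals the number of leaves of $T'$ outside $\{w\}$. It therefore suffices to match the leaf-contribution of $w$ in $T'$ against the combined leaf-contribution of $u$ and $v$ in $T$.

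I would split into two cases on $\deg_{T'}(w)$. If $\deg_{T'}(w)=1$, then $w$ is a leaf of $T'$ and $S=\{x\}$; attaching $x$ to an endpoint of $e$ to which it is adjacent gives that endpoint degree $2$ and the other degree $1$, so exactly one of $u,v$ is a leaf, matching the single leaf $w$, and $T$ has $k$ leaves. If $\deg_{T'}(w)\ge 2$, then $w$ is not a leaf of $T'$, and I need \emph{both} $u$ and $v$ to be internal in $T$, i.e.\ each must receive at least one vertex of $S$ in addition to the shared edge $uv$.

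This last distribution is the heart of the argument and the place where the hypothesis is used. The vertices of $S$ forced to $u$ are those in $S\cap(N_e(u)-N_e(v))$, those forced to $v$ are those in $S\cap(N_e(v)-N_e(u))$, and vertices in $S\cap N_e(u)\cap N_e(v)$ may go to either side. By hypothesis $|N_e(u)-N_e(v)|\le 1$ and $|N_e(v)-N_e(u)|\le 1$, so at most one element of $S$ is forced to each side; since $|S|=\deg_{T'}(w)\ge 2$, I can verify in each configuration (both private neighbours present, only one present, or none) that some assignment places at least one neighbour on $u$ and at least one on $v$. The bound $\le 1$ is exactly what excludes the dangerous situation in which every tree-neighbour of $w$ is forced to a single endpoint, which would leave the other endpoint a leaf and yield $k+1$ leaves. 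I expect this case $\deg_{T'}(w)\ge 2$ to be the main obstacle, as it is where the combinatorial hypothesis is genuinely needed; with the assignment above, both $u$ and $v$ are internal, so $T$ has exactly $k$ leaves and is the desired spanning $k$-end tree of $G$.
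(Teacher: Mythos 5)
Your proposal is correct and follows essentially the same route as the paper: replace the contracted vertex $w$ by the edge $uv$, split into the cases $\deg_{T'}(w)=1$ and $\deg_{T'}(w)\ge 2$, and in the latter case use the hypotheses $|N_e(u)-N_e(v)|\le 1$ and $|N_e(v)-N_e(u)|\le 1$ to guarantee that the tree-neighbours of $w$ can be distributed so that both $u$ and $v$ become internal vertices. Your write-up is in fact somewhat more explicit than the paper's about why that distribution always exists.
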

\begin{proof}
Suppose $T$ is a spanning $k$-end tree of $G/e$ and $w$ is the vertex of $G/e$ that produced by contracting edge $e$ by identifying vertices $v$ and $u$.\\
If $\deg_T w=1$ and $wy\in E(T)$ then $vy\in E(G)$ or $uy\in E(G)$, if $vy\in E(G)$ then we make a subdivision of $T$ with a new vertex(called $v$) on edge $wy$ and rename $w$ to $u$, then this subdivision is a spanning $k$-end tree of $G$, and if $uy\in E(G)$ do similar.\\
If $\deg_T w>1$ then at least one of it's $|N_T(w)|$ adjacent vertices is adjacent to $v$ in $G$ and at least one of them is adjacent to $u$ in $G$. Now we replace the $w$ in $T$ with the edge $e$ and consider $N_T(w)$, we can choose $x_1$,$x_2$ $\in N_T(w)$ such $x_1v\in E(G)$ and $x_2u\in E(G)$ and draw these two edges and for other vertices in $N_T(w)$ we connect each one to just one of $u$ and $v$ such that they are adjacent in $G$. Now we have a spanning $k$-end tree of $G$.

\end{proof}

\begin{corollary}
Suppose a graph $G$ has a spanning $k$-end tree such that $k$ is as minimal as possible, then there is no edge $e$ such $G/e$ has a spanning $p$-ended tree, where $p<k-1$.
\end{corollary}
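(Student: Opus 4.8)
The plan is to prove the sharper quantitative statement that contracting a single edge can decrease the minimum number of leaves of a spanning tree by at most one, from which the corollary is immediate. Write $k$ for the least number of leaves occurring in any spanning tree of $G$, so that $G$ has a spanning $k$-end tree and no spanning tree with fewer leaves. I would show that for \emph{every} edge $e$ of $G$, any spanning tree of $G/e$ with $\ell$ leaves can be expanded to a spanning tree of $G$ with at most $\ell+1$ leaves. Granting this, suppose for contradiction that some edge $e$ makes $G/e$ admit a spanning $p$-ended tree with $p<k-1$. Such a tree has at most $p\le k-2$ leaves, so its expansion is a spanning tree of $G$ with at most $k-1<k$ leaves, contradicting the minimality of $k$.

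The core of the argument is the expansion construction, which is exactly the ``uncontraction'' appearing in the proof of the preceding theorem, but now carried out \emph{without} the neighborhood hypotheses $|N_e(u)-N_e(v)|\le 1$ and $|N_e(v)-N_e(u)|\le 1$. Let $e=uv$, let $w$ be the vertex of $G/e$ obtained by identifying $u$ and $v$, and let $T'$ be a spanning tree of $G/e$ with $\ell$ leaves. I would build a subgraph $T$ of $G$ by reinstating the edge $uv$, retaining every edge of $T'$ not incident to $w$, and rerouting each tree-neighbor $y$ of $w$ to $u$ or to $v$ according to which of $uy, vy$ belongs to $E(G)$; at least one of them does, since $wy\in E(G/e)$. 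Counting edges gives $|V(G)|-1$ in total, and every vertex remains joined to $u$ or $v$, so $T$ is connected and hence a spanning tree of $G$.

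The key point is that this operation preserves the degree of every vertex except possibly $u$ and $v$: a vertex $x\ne w$ keeps all its edges to vertices other than $w$, and its one edge to $w$ (if present) is merely rerouted to $u$ or $v$. Hence the only possible change in the leaf count comes from the pair $\{u,v\}$, and the analysis splits on $\deg_{T'}(w)$. When $\deg_{T'}(w)=1$, the single neighbor is routed to one endpoint, making it non-leaf while the other endpoint becomes a leaf, so the leaf count is unchanged. When $\deg_{T'}(w)\ge 2$, if some tree-neighbor of $w$ is adjacent in $G$ to $u$ and some tree-neighbor is adjacent to $v$, I can distribute the neighbors so that both $u$ and $v$ receive at least one, leaving neither a leaf and again preserving the count; the only remaining possibility is that all tree-neighbors must be routed to one endpoint, which forces the other to be a leaf and raises the count by exactly one.

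The hard part will be the bookkeeping of this last case distinction, in particular verifying that the balanced distribution is always achievable whenever each of $u$ and $v$ has an available tree-neighbor---including the delicate subcase in which a single neighbor of $w$ is simultaneously adjacent to $u$ and to $v$---and confirming that in the forced case neither endpoint can drive the increase above one. Once these checks are in place, the bound $\ell+1$ holds uniformly, and the corollary follows from the minimality of $k$ exactly as set out in the first paragraph.
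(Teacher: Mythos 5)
Your proposal is correct and follows essentially the same route as the paper: you perform the same ``uncontraction'' of $w$ into the edge $uv$ used in Theorem 2.3, reroute each tree-neighbor of $w$ to an endpoint adjacent to it in $G$, and observe that the number of leaves grows by at most one, contradicting the minimality of $k$. Your version is in fact more careful than the paper's, which cites the earlier construction and asserts the $(p+1)$-ended bound without the degree and case analysis you supply.
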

\begin{proof}
If $T$ is a spanning $p$-ended tree of $G/e$ where $p<k-1$ then like proof of Theorem 2.3 if w is the vertex of $G/e$ produced by contraction on edge $e$ with end vertices $u$ and $v$, if we replace $w$ with edge $e$ and each vertex in $N_T(w)$ connect to just one of $u$ and $v$ such they are adjacent in $G$ the the new graph is a spanning $p+1$-ended tree of $G$ and this contradicts with minimality of $k$.
\end{proof}
\bigskip
\section*{Acknowledgement} I would like to thanks Daniel Yaqubi for offering various helpful suggestions which helped to make the presentation much clear.

\hspace{1in}

\bibliographystyle{amsplain}

\begin{thebibliography}{5}

\bibitem{haag1} A. Kyaw, \textit{Spanning Tree with at most $3$ leaves in $K_{1,4}$-free graphs}, Discrete Mathematics. 309 (2009), 6146--6148.

\bibitem{H} O. Ore, Note on Hamilton circuits, Amer. Math. Monthly 67 (1960) 55

\bibitem{H} Y. Chen, GuanTao Chen, ZhinQuan Hu, Spanning $3$-ended trees in $k$-connected $K_{1,4}$-free graphs, Science China Mathematics. 57 (2014), 1579--1586

\bibitem{H} M. Kano, H. Matsuda, M. Tsugaki, G. Yan, Spanning $k$-ended trees of bipartite graphs, Discrete Mathematics. 313 (2013), 2903--2907

\end{thebibliography}

\end{document}